\documentclass{amsart}

\usepackage{latexsym,amsthm,amssymb}

\DeclareMathAlphabet{\mathpzc}{OT1}{pzc}{m}{it}

\usepackage[leqno]{amsmath}

\usepackage{mathrsfs}

\usepackage[all]{xy}

\usepackage{color}
\usepackage{colordvi}

\addtolength{\marginparwidth}{-1cm}
\addtolength{\textwidth}{2.5cm}
\addtolength{\textheight}{0.5cm}
\addtolength{\oddsidemargin}{-1cm}
\addtolength{\evensidemargin}{-1cm}
\addtolength{\topmargin}{-0.5cm}


\usepackage{verbatim}

\usepackage{ifthen}
\newboolean{hide.proofs}
\setboolean{hide.proofs}{false}


\newtheorem{theorem}{Theorem}[section]

\newtheorem{proposition}[theorem]{Proposition}

\newtheorem{remark}[theorem]{Remark}


\newtheorem{lemma-conjecture}[theorem]{Lemma--Conjecture}



\numberwithin{equation}{theorem}


\renewcommand{\mathcal}{\mathscr}

\renewcommand{\mathbb}{\mathbf}

\title{Deformations of canonical triple covers}

\author{Francisco Javier Gallego}
\author{Miguel Gonz\'alez}
\author{\\ Bangere P. Purnaprajna}

\address{Departamento de \'Algebra, Universidad Complutense de Madrid and Instituto de Matem\'atica Interdisciplinar, Madrid, Spain}
\email{gallego@mat.ucm.es}
\address{Departamento de \'Algebra, Universidad Complutense de Madrid, Madrid, Spain}
\email{mgonza@mat.ucm.es}
\address{Department of Mathematics, University of Kansas, Lawrence, Kansas, USA }
\email{purna@math.ku.edu}

\begin{document}

\begin{abstract}
In this paper, we show that if $X$ is a smooth variety of general type of dimension $m \geq 3$ for which the canonical map induces a triple cover onto $Y$, where $Y$ is a projective bundle over $\mathbf P^1$ or onto a projective space or onto a quadric hypersurface, embedded by a complete linear series, then the general deformation of the canonical morphism of $X$ is again canonical and induces a triple cover.  The extremal case when $Y$ is embedded as a variety of minimal degree is of interest, due to its appearance in numerous situations. This is especially interesting as well, since it has no lower dimensional analogues. 
\end{abstract}

\maketitle

\section*{Introduction}
           In this article we prove new results on deformations of canonical morphisms of degree $3$ of a variety of general type of arbitrary dimension when the image $Y$ of the morphism is a rational variety such as the projective space, a quadric hypersurface or a $\mathbf P^n$ bundle over a projective line. We give special emphasis on those $Y$ when it is a variety of minimal degree. This is a new case that does not occur in either curves or surfaces. Note that the degree of the canonical morphism for curves is bounded by $2$. It was shown in \cite{Seshadri} that there are no odd degree canonical covers of smooth surfaces of minimal degree. The geometric genus of a triple canonical cover of a singular surface of minimal degree is bounded by 5. So the geometry of the higher dimensional covers has no analogy in lower dimensions. In this article we handle more general triple covers even when $Y$ is not of minimal degree. It is a well known fact that low degree covers of varieties of minimal degree have a ubiquitous presence in the geometry of varieties of general type, especially in the case of algebraic surfaces and threefolds such as Calabi-Yau. They appear in the geometry of higher dimensional varieties of general type as well, as \cite{Fujita1} and \cite{Kobayashi} and the very recent work of \cite{CPG} show. These deal mostly with the case of double covers. The work \cite{Fujita2} deals with structure theorem for triple covers.

\medskip

      The result of Castelnuovo for an algebraic surface of general type says that if the linear system $|K_{X}|$ is birational, then $K_{X}^{2}\geq 3p_g-7$. There is no known general result for a threefold of general type in these directions. The triple canonical covers of rational varieties satisfy the inequality $K_{X}^{3}\geq 3p_g-9$. There is a strong correlation in the numerology between surfaces and threefolds.  It was shown in a very recent work in \cite{CPG} that the geometry of Horikawa threefolds, which satisfy $K_{X}^{3}=2p_g-6$ has striking similarities to Horikawa surfaces, that is those surfaces which are on the Noether line $K_{X}^{2}=2p_g-4$.  This analogy is important for understanding the geometry of the moduli spaces of higher dimensional varieties. The numerology suggests that for a threefold, the analogy of Castelnuovo's bound could be $K_{X}^{3}\geq 3p_g-9$. That is for a threefold $X$, if the linear system $|K_{X}|$  induces a birational map, then $K_{X}^{3}\geq 3p_g-9$. In any case, the equality $K_{X}^{3}=3p_g-9$, is sufficiently high for a threefold, and $|K_X|$ can be potentially (or conjecturally) birational. This is just for the case when the image of the canonical map $Y$ is of minimal degree. If the image is not of minimal degree, then $K_{X}^{3}$ can be much higher.  But the result in this article shows that even when $K_{X}^{3}$ is sufficiently high, as in the case of triple covers, the deformation of the the degree $3$ morphisms are again degree $3$. The impact of these results on the moduli components is clear. For the case of algebraic surfaces, the analogues of the results proved here for higher dimensions, showed such an impact on the moduli components, as demonstrated in \cite{MP}.
      
\medskip

It is known from results in \cite{Seshadri}, that there are no triple canonical covers of a smooth variety of minimal degree if the dimension of the variety of general type is even. In this article we do show that for an algebraic surface of general type, there are no triple canonical covers of rational scrolls $Y$ or $\mathbf P^2$ or a quadric $\mathbf Q_{2}$, whether they are embedded as a variety of minimal degree or not.  But if the dimension of $X$ is odd, there are infinite families of examples of triple canonical covers, and this appears in the last section. 

\smallskip

{\bf Acknowledgements}: The first and the second author were partially supported by grants MTM2009-06964 and MTM2012-32670 and by the UCM research group 910772. The first author also thanks the Department of Mathematics of the University of Kansas for its hospitality. The third author thanks the National Science foundation (Grant Number: 1206434) for the partial support of this work.

\section{Deformations of a canonical triple cover of a projective bundle over $\mathbf P^1$ or a projective space or a quadric hypersurface embedded by a complete linear series}
We work over $\mathbb C$. We will use the following 

\noindent
{\bf Set--up and notation:} 
\begin{enumerate}
\item $X$ is a smooth variety of general type of dimension $m \geq 3$
with ample and base--point--free canonical bundle and canonical map $\varphi :X \to \mathbb P ^N$
of degree $3$. 
\item The image $Y$ of $\varphi$ is one of the following varieties:
	\begin{enumerate}
		\item a smooth projective bundle $Y=\mathbb P(E)$ (of dimension $m$) over $\mathbf P^1$ (where $E=\mathcal O_{\mathbb P^1} \oplus \mathcal O_{\mathbb P^1}(-e_1) \oplus \cdots \oplus \mathcal O_{\mathbb P^1}(-e_{m-1})$ with $0 \leq e_1\leq \cdots \leq e_{m-1}$) embedded, as a non--degenerate variety, in $\mathbb P ^N $ by the complete linear series of a very ample divisor $aT+bF$, for which $T$ is the divisor on $Y$ such
		that $\mathcal O_Y(T) = \mathcal O_{\mathbf P(E)}(1)$ and $F=\mathbb P^{m-1}$ is a fiber, or
		\item projective space $Y=\mathbb P^m$ embedded, as a non--degenerate variety, in $\mathbb P^N$ by the complete linear series of $\mathcal O_{\mathbb P^m}(d)$, or
		\item a smooth quadric hypersurface $Y=\mathbb Q_m$ in $\mathbb P^{m+1}$ embedded, as a non--degenerate variety, in $\mathbb P^N$ by the complete linear series of $\mathcal O_{\mathbb Q_m}(d)$, for which $m+d \geq 6$.
	\end{enumerate}
\item Let $i: Y \hookrightarrow \mathbb P^N$ denote the embedding and let $\pi: X \to Y$ denote the induced triple cover such that $\varphi=i \circ \pi$.
\end{enumerate}
\begin{remark}\label{Seshadri.remark}{\rm Under the conditions of our set--up, $\varphi$ factors through a triple
cover $\pi: X \longrightarrow Y$ whose trace--zero module is a
vector bundle $\mathcal E$ of rank $2$. In addition, \cite[Theorem
4.3]{Seshadri} says that $\varphi$ is the canonical map of $X$ if, and only if,
	\begin{enumerate}
	\item the triple cover $\pi$ is cyclic with $\mathcal E = \frac{1}{2}(\omega_Y(-1)) \oplus \omega_Y(-1)$, and 
	\item  $H^0(\mathcal O_Y(1) \otimes \frac{1}{2}(\omega_Y(-1)))=0$.
   \end{enumerate}
   Observe that, since $p_g(Y)=0$, condition $(2)$ is equivalent to the fact that the pullback of the complete linear series of $\mathcal O_Y(1)$ is the complete canonical series of $X$.}
\end{remark}
\begin{remark}\label{-3/2}
{\rm Under the conditions of our set--up, there exists a triple cover $\pi$ for which $\varphi= i \circ \pi$ is the canonical map of $X$ if, and only if,
	\begin{enumerate}
	\item  the linear system $|\frac{-3}{2}(\omega_Y(-1))|$ contains a smooth member, and
	\item $H^0(\mathcal O_Y(1) \otimes \frac{1}{2}(\omega_Y(-1)))=0$.
	\end{enumerate}}
\end{remark}
\begin{remark}\label{numerical}
{\rm Under the conditions of our set--up, numerical conditions which are sufficient for $\pi$ to exist are:
\begin{enumerate}
\item In the case $Y=\mathbb P(E)$
	\begin{enumerate}
		\item $a \geq 1$ and $b \geq a e_{m-1}+1$ (for $\mathcal O_Y(1)=\mathcal O_{\mathbb P(E)}(aT+bF)$ very ample), and
		\item $m+a$ is even, and $b+e_1+\cdots+e_{m-1}$ is even (for $\frac{1}{2}(\omega_Y(-1)))$ to exist), and
		\item $b \geq -(e_1+\cdots+e_{m-2}+(1-m-a)e_{m-1}+2)$ (for $|\frac{-3}{2}(\omega_Y(-1))|$ base--point--free), and
		\item $a \leq m-1$ (for $H^0(\mathcal O_Y(1) \otimes \frac{1}{2}(\omega_Y(-1)))=0$).
		\end{enumerate}
\item In the case $Y=\mathbb P^m$
	\begin{enumerate}
	\item $d \geq 1$ (for $\mathcal O_Y(1)=\mathcal O_{\mathbb P^m}(d)$ very ample), and
	\item $m+d$ is odd (for $\frac{1}{2}(\omega_Y(-1)))$ to exist), and
	\item $d \leq m$ (for $H^0(\mathcal O_Y(1) \otimes \frac{1}{2}(\omega_Y(-1)))=0$). 
	\end{enumerate}
\item In the case $Y=\mathbb Q_m$
	\begin{enumerate}
	\item $d \geq 1$ (for $\mathcal O_Y(1)=\mathcal O_{\mathbb Q_m}(d)$ very ample), and
		\item $m+d$ is odd (for $\frac{1}{2}(\omega_Y(-1)))$ to exist), and
		\item $d \leq m-2$ (for $H^0(\mathcal O_Y(1) \otimes \frac{1}{2}(\omega_Y(-1)))=0$). 
	\end{enumerate}
\end{enumerate}}
\end{remark}

\noindent
In \cite{Seshadri}, a result there shows that that there are no triple canonical covers of a smooth variety of minimal degree if the dimension of the variety of general type is even. A natural question would be to know if there are any canonical triple covers at all of say projective bundle $Y$ over a line, such that it is embedded by the complete canonical linear series. 
The next result shows that for an algebraic surface of general type, there are no triple canonical covers of rational scrolls $Y$ or $\mathbf P^2$ or a quadric $\mathbf Q_{2}$ per se, whether they are embedded as a variety of minimal degree or not.  If we relax the condition that it is only the sublinear series of the canonical linear series, but not the whole linear series, then the following result does tell us that such exist for the class of algebraic surfaces. 

\begin{proposition}
In the case $m=2$, of a Hirzebruch surface $Y=\mathbb F_e$ or $Y=\mathbb P^2$ or $Y=\mathbb Q_2$, there exist cyclic triple covers $\pi: X \to Y$ for which $\omega_X=\pi^{\ast}(\mathcal O_Y(1))$, but the pullback $\pi^{\ast}H^0(\mathcal O_Y(1))$ is a non--complete subseries of $H^0(\omega_X)$, except for the case $Y=\mathbb P ^2$ and $d=1$.
\end{proposition}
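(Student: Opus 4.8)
\medskip

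\noindent\textbf{Sketch of the argument.}
The plan is to realize every such $\pi$ as a cyclic triple cover and to read off $H^0(\omega_X)$ from $\pi_*\mathcal O_X$. First I would record the following: for a cyclic triple cover $\pi\colon X\to Y$ one has $\pi_*\mathcal O_X=\mathcal O_Y\oplus\mathcal L^{-1}\oplus\mathcal L^{-2}$ and $\omega_X=\pi^{\ast}(\omega_Y\otimes\mathcal L^{\otimes 2})$ for the defining line bundle $\mathcal L$; since $\operatorname{Pic}Y$ is torsion--free in all three cases, $\omega_X=\pi^{\ast}(\mathcal O_Y(1))$ forces $\mathcal L^{\otimes 2}=\mathcal O_Y(1)\otimes\omega_Y^{-1}$, i.e.\ $\mathcal L=N^{-1}$ with $N=\frac{1}{2}(\omega_Y(-1))$ (which in particular must exist; this is the parity condition). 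Conversely, once $N$ exists, such a cover is produced by a section of $\mathcal L^{\otimes 3}=\frac{-3}{2}(\omega_Y(-1))$, and $X$ is smooth as soon as $|\frac{-3}{2}(\omega_Y(-1))|$ has a smooth member (standard theory of cyclic covers; compare Remark~\ref{-3/2}). For such an $X$, $\pi_*\mathcal O_X=\mathcal O_Y\oplus N\oplus N^{\otimes 2}$, so by the projection formula
\[
\pi_*\omega_X=\pi_*\pi^{\ast}\mathcal O_Y(1)=\mathcal O_Y(1)\otimes\pi_*\mathcal O_X=\mathcal O_Y(1)\oplus\bigl(\mathcal O_Y(1)\otimes N\bigr)\oplus\bigl(\mathcal O_Y(1)\otimes N^{\otimes 2}\bigr),
\]
hence $H^0(\omega_X)=H^0(\mathcal O_Y(1))\oplus H^0(\mathcal O_Y(1)\otimes N)\oplus H^0(\mathcal O_Y(1)\otimes N^{\otimes 2})$, the first summand being exactly $\pi^{\ast}H^0(\mathcal O_Y(1))$. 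Now $\mathcal O_Y(1)\otimes N^{\otimes 2}=\omega_Y$, so the last summand vanishes because $p_g(Y)=0$; and $\mathcal O_Y(1)\otimes N=\frac{1}{2}(\omega_Y(1))$. Therefore $\pi^{\ast}H^0(\mathcal O_Y(1))=H^0(\omega_X)$ if and only if $H^0(\frac{1}{2}(\omega_Y(1)))=0$.

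Next I would check existence of a smooth $X$ which is a minimal surface of general type. For $Y=\mathbb P^2$ and $Y=\mathbb Q_2$ the class $\frac{-3}{2}(\omega_Y(-1))$ is a positive multiple of the very ample generator of $\operatorname{Pic}Y$, hence very ample, so its general member is smooth by Bertini; for $Y=\mathbb F_e$ I would choose the very ample polarization $\mathcal O_Y(1)=aT+bF$ with $a,b$ satisfying the parity constraints and $b$ large enough (for instance $(a,b)=(2,3e+2)$) so that $\frac{-3}{2}(\omega_Y(-1))=\frac{3}{2}\bigl((a+2)T+(b+e+2)F\bigr)$ is nef, hence globally generated on the toric surface $\mathbb F_e$, and Bertini again applies. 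The resulting cyclic triple cover $X$ is smooth; it is connected because $H^0(N)=H^0(N^{\otimes 2})=0$ (as $\mathcal O_Y(1)$ is very ample and $p_g(Y)=0$), and, $\omega_X=\pi^{\ast}\mathcal O_Y(1)$ being ample, $X$ is a minimal surface of general type.

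Finally I would compute $H^0(\frac{1}{2}(\omega_Y(1)))$ case by case. On $\mathbb P^2$ with $\mathcal O_Y(1)=\mathcal O_{\mathbb P^2}(d)$ ($d$ odd, the parity condition) one gets $\frac{1}{2}(\omega_Y(1))=\mathcal O_{\mathbb P^2}(\tfrac{d-3}{2})$, which has a nonzero section if and only if $d\geq 3$; so the pullback is complete precisely for $d=1$. On $\mathbb Q_2=\mathbb P^1\times\mathbb P^1$ with $\mathcal O_Y(1)=\mathcal O_{\mathbb Q_2}(d)=\mathcal O(d,d)$ the condition for $N$ to exist is that $d$ be \emph{even} (here the surface quadric behaves differently from the higher--dimensional quadrics of the set--up, since $\operatorname{Pic}(\mathbb P^1\times\mathbb P^1)=\mathbb Z^2$), and then $\frac{1}{2}(\omega_Y(1))=\mathcal O(\tfrac{d-2}{2},\tfrac{d-2}{2})$ has a nonzero section for every admissible $d\geq 2$. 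On $\mathbb F_e$ with $\mathcal O_Y(1)=aT+bF$ one gets $\frac{1}{2}(\omega_Y(1))=\tfrac{a-2}{2}T+\tfrac{b-e-2}{2}F$, and very ampleness ($a\geq 1$, $b\geq ae+1$) together with parity ($a$ even, $b\equiv e\bmod 2$) force $a\geq 2$ and $b\geq e+2$, so both coefficients are nonnegative and $H^0(\frac{1}{2}(\omega_Y(1)))\neq 0$. Combining this with the previous two paragraphs yields the proposition. The only delicate points are the careful bookkeeping of the parity and very--ampleness conditions — notably that for $m=2$ the numerics for $\mathbb Q_2$ are not those of Remark~\ref{numerical} — and the choice of polarization on $\mathbb F_e$ making the branch system base--point--free so that Bertini produces a smooth $X$.
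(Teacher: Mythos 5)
Your proposal is correct and follows essentially the same route as the paper: construct the cover as a cyclic triple cover branched along a smooth member of $|\frac{-3}{2}(\omega_Y(-1))|$, reduce completeness of $\pi^{\ast}H^0(\mathcal O_Y(1))$ to the vanishing of $H^0(\mathcal O_Y(1)\otimes\frac{1}{2}(\omega_Y(-1)))$ (which you re-derive via the projection formula where the paper invokes $p_g(Y)=0$ and Remark~\ref{-3/2}), and then check case by case that this group is nonzero except for $\mathbb P^2$ with $d=1$. The only minor divergences are that you exhibit particular admissible polarizations on $\mathbb F_e$ (and the balanced ones on $\mathbb Q_2$) to get a smooth branch divisor, whereas the paper analyzes all of them, including the case where $T$ is a fixed component of the branch system; the substance of the argument is the same.
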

\begin{proof}
We will use the conditions of Remark~\eqref{-3/2}.

\noindent
First we deal with the case $Y=\mathbb F_e$.
We claim that $|\frac{-3}{2}(\omega_Y(-1))|$ contains a smooth member if, and only if, $(1)$ $e \leq 3$, or $(2)$ ($b-ae \geq e - 2$ or $b-ae = \frac{1}{3}e-2$), if $e\geq 4$. Indeed, if $|\frac{-3}{2}(\omega_Y(-1))|$ contains a
smooth member but $\frac{-3}{2}(\omega_Y(-1))$ is not base--point--free, then $T$ is
the fixed part of $|\frac{-3}{2}(\omega_Y(-1))|$ and does not intersect the mobile
(base--point--free) part of $|\frac{-3}{2}(\omega_Y(-1))|$. This happens if and only if
$b-ae = \frac{1}{3}e-2$. On the other hand $\frac{-3}{2}(\omega_Y(-1))$ is
base--point--free if and only if $b-ae \geq e - 2$, which always holds if $0
\leq e  \leq 3$, since $b-ae \geq 1$.

\smallskip

\noindent Now we see that we can construct a triple cover of $Y$. If $b-ae \geq e - 2$, then $\frac{-3}{2}(\omega_Y(-1))$ is
base--point--free and obviously non trivial, so $\frac{-3}{2}(\omega_Y(-1))$ contains
smooth members. If $b-ae = \frac{1}{3}e-2$, then  $T$ is
the fixed part of $|\frac{-3}{2}(\omega_Y(-1))|$ and does not intersect the mobile
(base--point--free) part of $|\frac{-3}{2}(\omega_Y(-1))|$, so $|\frac{-3}{2}(\omega_Y(-1))|$
contains smooth (non--connected) members. Thus in all cases we can choose a
smooth divisor $B \in |\frac{-3}{2}(\omega_Y(-1))|$. Let $\pi: X \longrightarrow Y$ be the
triple cyclic cover of $Y$ branched along $B$. Since $B$ is smooth, so is $X$. Since $B
\in |\frac{-3}{2}(\omega_Y(-1))|$, ramification formula implies that
$\omega_X=\pi^*(\mathcal O_Y(1))$.

\noindent
Since $p_g(Y)=0$, then $\pi^*H^0(\mathcal O_Y(1))$ equals the complete series $H^0(\omega_X)$ (so $\varphi= i \circ \pi$) if, and only if, $H^0(\mathcal O_Y(1) \otimes \frac{1}{2}(\omega_Y(-1)))=0$.

\noindent
Since $\mathcal O_Y(1) \otimes \frac{1}{2}(\omega_Y(-1))=\frac{a-2}{2}T+\frac{b-e-2}{2}F$ and $a \geq 2$ (for $a$ is even), then we see that $H^0(\mathcal O_Y(1) \otimes \frac{1}{2}(\omega_Y(-1)))$ does not vanish, since $b-e-2 \geq 0$ (for $b \geq ae+1$ and $b-e$ is even).

\smallskip
\noindent
In the case $Y=\mathbb P^2$ it is straightforward to see that triple covers exist, since for $\mathcal O_Y(1)=\mathcal O_{\mathbb P^2}(d)$ with $d\geq 1$ odd, the linear series of $H^0(\frac{-3}{2}(\omega_Y(-1)))$ is very ample (so it contains smooth members), but $H^0(\mathcal O_Y(1) \otimes \frac{1}{2}(\omega_Y(-1)))=H^0(\mathcal O_{\mathbb P^m}(\frac{d-3}{2}))$ does not vanish, unless $d=1$.

\smallskip
\noindent
In the case $Y=\mathbb Q_2$ it is also straightforward to see that triple covers exist, since for $\mathcal O_Y(1)=\mathcal O_{\mathbb Q_2}(a, b)$ with $a, b \geq 2$ even, the linear series of $H^0(\frac{-3}{2}(\omega_Y(-1)))$ is very ample (so it contains smooth members), but $H^0(\mathcal O_Y(1) \otimes \frac{1}{2}(\omega_Y(-1)))=H^0(\mathcal O_{\mathbb Q_2}(\frac{a-2}{2}, \frac{b-2}{2}))$ does not vanish.
\end{proof}

\noindent
The next Propositions~\ref{ext=0}, \ref{2.ext=0}, \ref{3.ext=0} are crucial technical results that lead to the main Theorem~\ref{main} of the article. They deal with the vanishing of a certain Ext groups. Together with duality theorems this in turn proves first cohomology vanishing of the tangent sheaf twisted with relevant line bundles, which in turn leads to the main result. One has to be careful about the interpretations of the results, which is also a crucial part of the process.

\begin{proposition}\label{ext=0}
Let $Y$ be a smooth projective bundle on $\mathbf P^1$ of dimension $m \geq 3$ embedded, as a non--degenerate variety, in $\mathbf P^N$ by a complete linear series. Then $\mathrm{Ext}^1(\Omega_Y, \frac{1}{2}(\omega_Y(-1)))=0$ and $\mathrm{Ext}^1(\Omega_Y, \omega_Y(-1))=0$.
\end{proposition}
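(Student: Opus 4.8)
The plan is to compute the relevant Ext groups directly on $Y = \mathbb{P}(E)$ using the structure of $Y$ as a projective bundle $p : Y \to \mathbf{P}^1$, via the relative Euler sequence and the resulting long exact sequences of cohomology. First I would replace $\mathrm{Ext}^1(\Omega_Y, \mathcal{L})$ by $H^1(T_Y \otimes \mathcal{L})$, where $\mathcal{L}$ is either $\tfrac{1}{2}(\omega_Y(-1))$ or $\omega_Y(-1)$; this is legitimate because $\Omega_Y$ is locally free, so $\mathrm{Ext}^1(\Omega_Y, \mathcal{L}) \cong \mathrm{Ext}^1(\mathcal{O}_Y, T_Y \otimes \mathcal{L}) \cong H^1(T_Y \otimes \mathcal{L})$. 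To get at $T_Y$, I would combine the relative Euler sequence
\[
0 \to \mathcal{O}_Y \to p^*E^\vee(T) \to T_{Y/\mathbf{P}^1} \to 0
\]
(with $\mathcal{O}_Y(T) = \mathcal{O}_{\mathbb{P}(E)}(1)$) with the relative tangent sequence
\[
0 \to T_{Y/\mathbf{P}^1} \to T_Y \to p^*T_{\mathbf{P}^1} \to 0 .
\]
Tensoring both sequences with $\mathcal{L}$ and chasing cohomology reduces everything to vanishing of $H^1$ and surjectivity/injectivity of certain maps between cohomology groups of line bundles (and direct sums of line bundles) on $Y$, all of which can be computed by pushing forward to $\mathbf{P}^1$ and using that $R^ip_*$ of a line bundle $\mathcal{O}_{\mathbb{P}(E)}(k) \otimes p^*(\text{line bundle})$ is understood from $\mathrm{Sym}^k E$ (for $k \geq 0$), is zero in the intermediate range, and is $R^{m-1}p_*$ governed by Serre duality on the fibers $\mathbf{P}^{m-1}$.

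The key input that makes the computation run is the explicit form of the line bundles involved. Writing $\omega_Y = \mathcal{O}_Y(-mT + (\deg E - 2)F)$ (from $\omega_{Y/\mathbf{P}^1} = \mathcal{O}_{\mathbb{P}(E)}(-m) \otimes p^*\det E \otimes \cdots$ adjusted by $\omega_{\mathbf{P}^1}$), one has $\omega_Y(-1) = \mathcal{O}_Y(-(m+1)T + cF)$ for an explicit constant $c$ depending on $e_1, \ldots, e_{m-1}$, and $\tfrac{1}{2}(\omega_Y(-1))$ is its "square root" $\mathcal{O}_Y(-\tfrac{m+1}{2}T + \tfrac{c}{2}F)$ (which exists precisely under the numerical parity conditions of Remark 1.4, but those conditions are not needed here since we only assert the Ext vanishing, i.e.\ we may assume the bundle exists). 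In both cases $\mathcal{L}$ has a strictly negative $T$-degree, so after tensoring with $p^*E^\vee(T)$ (which raises the $T$-degree by one) the $T$-degree becomes at most $-\tfrac{m-1}{2} < 0$, landing us in the range where $p_*$ vanishes and only $R^{m-1}p_*$ can contribute; since the base is $\mathbf{P}^1$, $H^1$ of the pushforward then reduces to at most two Leray terms, and the numerics (non-degeneracy of the embedding, i.e.\ $a \geq 1$, $b \geq ae_{m-1}+1$, forcing $c$ to be suitably positive) should kill both.

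I expect the main obstacle to be bookkeeping: carefully tracking the twist by $p^*\det E$ and by $\omega_{\mathbf{P}^1} = \mathcal{O}_{\mathbf{P}^1}(-2)$ through the two short exact sequences, and correctly identifying $R^{m-1}p_*\big(\mathcal{O}_{\mathbb{P}(E)}(-j) \otimes p^*M\big)$ for $j$ in the critical range $1 \le j \le m$ via relative duality as $\big(\mathrm{Sym}^{j-m}E\big)^\vee \otimes \det E^\vee \otimes M$ when $j \geq m$ and as $0$ when $j < m$ — and then checking that the one surviving case $j = m$ contributes a bundle on $\mathbf{P}^1$ with no $H^1$. A secondary subtlety is the connecting map $H^0(p^*T_{\mathbf{P}^1} \otimes \mathcal{L}) \to H^1(T_{Y/\mathbf{P}^1} \otimes \mathcal{L})$ from the relative tangent sequence: I would argue the source vanishes (again because $\mathcal{L}$ has negative $T$-degree, so $p_*(p^*T_{\mathbf{P}^1}\otimes \mathcal{L}) = 0$), so that $H^1(T_Y \otimes \mathcal{L})$ injects into a group that the Euler-sequence analysis already shows to vanish. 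Throughout, the fact that $\dim Y = m \geq 3$ (so the fibers have dimension $\geq 2$) is what guarantees the intermediate $R^ip_*$ vanishing that is unavailable for surfaces — consistent with the paper's remark that this phenomenon has no lower-dimensional analogue.
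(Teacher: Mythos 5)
Your plan is essentially the paper's own proof: the same reduction of $\mathrm{Ext}^1(\Omega_Y,\mathcal L)$ to $H^1(\mathcal T_Y\otimes\mathcal L)$, the same pair of exact sequences (relative tangent and relative Euler), and the same push-down to $\mathbf P^1$ with the very-ampleness inequalities $a\geq 1$, $b> ae_{m-1}$ supplying the one nontrivial numerical check (which, as you anticipate, only bites for $m=3$ and comes out positive). The only cosmetic differences are that the paper applies Serre duality on $Y$ before pushing down rather than relative duality along $p$, and that it quotes an external reference for the $\omega_Y(-1)$ case, which your uniform treatment handles by the identical computation.
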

\begin{proof}
We already now by \cite[Proposition 1.2]{DCMDF} that Ext$^1(\Omega_Y,\omega_Y(-1))=0$. What we have to prove now is $\mathrm{Ext}^1(\Omega_Y, \frac{1}{2}(\omega_Y(-1)))=0$.

\noindent 
Since $\mathrm{Ext}^1(\Omega_Y, \frac{1}{2}(\omega_Y(-1)))=
H^1(\mathcal T_Y \otimes \frac{1}{2}(\omega_Y(-1)))$. We will prove $H^1(\mathcal T_Y \otimes \frac{1}{2}(\omega_Y(-1)))=0$.

\smallskip
\noindent
Let $H$ denote the hyperplane section. 
Having in account the sequence
  \begin{equation*}
 0 \longrightarrow \mathcal T_{Y/\mathbf P^1} \longrightarrow \mathcal T_Y
\longrightarrow  p^*\mathcal T_{\mathbf
P^1} \longrightarrow 0
\end{equation*}
and the dual of the relative Euler sequence
\begin{equation*}
 0 \longrightarrow \mathcal O_{\mathbf P(E)} \longrightarrow p^*E^* \otimes
\mathcal O_{\mathbf P(E)}(1) \longrightarrow  \mathcal T_{Y/\mathbf P^1}
\longrightarrow 0, 
\end{equation*}
we only need to check the vanishings of 
$H^1(p^*\mathcal T_{\mathbf P^1} \otimes \frac{1}{2}\omega_Y \otimes \mathcal O_Y(-\frac{1}{2}H))$, $H^1(p^*E^* \otimes \mathcal O_{\mathbf P(E)}(1) \otimes \frac{1}{2}\omega_Y \otimes \mathcal O_Y(-\frac{1}{2}H)))$ and $H^2(\frac{1}{2}\omega_Y \otimes \mathcal O_Y(-\frac{1}{2}H))$. By Serre duality this is equivalent to proving the vanishings of  $H^{m-1}(p^*\omega_{\mathbf P^1} \otimes \frac{1}{2}\omega_Y \otimes \mathcal O_Y(\frac{1}{2}H))$, $H^{m-1}(p^*E \otimes \mathcal O_{\mathbf P(E)}(-1) \otimes \frac{1}{2} \omega_Y \otimes  \mathcal O_Y(\frac{1}{2}H))$ and $H^{m-2}(\frac{1}{2}\omega_Y \otimes \mathcal O_Y(\frac{1}{2}H))$.
These cohomology groups are isomorphic to the cohomology groups of the push--down
to $\mathbf P^1$ of the three bundles. Indeed, in the three cases, when we
restrict the bundles to the fiber $F$ ($F =  \mathbf P^{m-1}$) we obtain line
bundles or direct sum of line bundles, so the intermediate cohomology of the
restriction of the bundles to $F$ vanish. The topmost (i.e.,
$H^{m-1}$) cohomology also vanishes since the line bundles just mention have
degree greater than or equal to $-\frac{m}{2}$. Now the three cohomology groups of the
push--downs on $\mathbf P^1$ are $0$ by dimension reasons except the
group $H^{m-2}(p_*(\frac{1}{2}\omega_Y \otimes \mathcal
O_Y(\frac{1}{2}H)))$ when $m = 3$. In this case we do also show that 
$H^{1}(p_*(\frac{1}{2}\omega_Y \otimes \mathcal
O_Y(\frac{1}{2}H)))=0$. Let $E = \mathcal O_{\mathbf P^1} \oplus \mathcal O_{\mathbf
P^1}(-e_1) \oplus \mathcal O_{\mathbf
P^1}(-e_2)$, with $0 \leq e_1 \leq e_2$, and let $T$ be the divisor on $Y$ such
that $\mathcal O_Y(T) = \mathcal O_{\mathbf P(E)}(1)$.  Recall that the
canonical divisor of $Y$ is 
$-mT-(e_1+e_2+2)F$ and $H$ is linearly equivalent to $aT + bF$. Since $H$
is very ample, $a > 0$ and $b > ae_2$. Then $\frac{1}{2}\omega_Y \otimes \mathcal
O_Y(\frac{1}{2}H)$ is linearly equivalent to $\frac{a-3}{2}T+ \frac{b-e_1-e_2-2}{2}F$.
If $a=1$, the vanishing is clear because the push--down bundle is $0$. If $a
\geq 3$ (and odd), the push--down bundle is a direct sum of line bundles and
the smallest of the degrees of these line bundles is $d=\frac{-(a-3)e_2 +
b-e_1-e_2-2}{2}$. Because of $b > ae_2$, $d > \frac{2e_2-e_1-2}{2}$. Since $e_1
\leq e_2$, $d > \frac{e_2-2}{2}$. Finally, since $e_2 \geq 0$, $d > -1$, so
$H^1$ of all the line bundles vanishes.
\end{proof}

\begin{proposition}\label{2.ext=0}
Let $Y=\mathbb P^m$, with $m \geq 3$, embedded, as a non--degenerate variety, in $\mathbf P^N$ by the complete linear series of $\mathcal O_{\mathbb P^m}(d)$, for which $d \geq 1$ and $m+d$ is odd. Then $\mathrm{Ext}^1(\Omega_Y, \frac{1}{2}(\omega_Y(-1)))=0$ and $\mathrm{Ext}^1(\Omega_Y, \omega_Y(-1))=0$. 
\end{proposition}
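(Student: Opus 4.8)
\emph{Proof proposal.} The plan is to reduce both vanishings to a one-line cohomology computation on $\mathbb P^m$. Since $\Omega_Y$ is locally free, for every coherent sheaf $M$ on $Y$ one has $\mathrm{Ext}^1(\Omega_Y,M)\cong H^1(\mathcal T_Y\otimes M)$ (the local-to-global $\mathrm{Ext}$ spectral sequence degenerates because $\mathcal Ext^q(\Omega_Y,M)=0$ for $q>0$). So it suffices to prove $H^1(\mathcal T_Y\otimes\frac{1}{2}(\omega_Y(-1)))=0$ and $H^1(\mathcal T_Y\otimes\omega_Y(-1))=0$. The second vanishing is already recorded in \cite[Proposition 1.2]{DCMDF}, but it also comes out of the argument below at no extra cost.

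Next I would make the twisting line bundles explicit. On $Y=\mathbb P^m$ we have $\omega_Y=\mathcal O_{\mathbb P^m}(-m-1)$ and $\mathcal O_Y(1)=\mathcal O_{\mathbb P^m}(d)$, hence $\omega_Y(-1)=\mathcal O_{\mathbb P^m}(-(m+1+d))$. The hypothesis that $m+d$ is odd is precisely what makes the square root $\frac{1}{2}(\omega_Y(-1))$ exist (since $\mathrm{Pic}\,\mathbb P^m=\mathbb Z$), and it equals $\mathcal O_{\mathbb P^m}(-j)$ with $j=\frac{m+1+d}{2}$ a positive integer. Thus both statements reduce to showing $H^1(\mathcal T_{\mathbb P^m}(-j))=0$ for a positive integer $j$ (with $j$ replaced by $2j$ in the second case).

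Finally I would tensor the Euler sequence
\begin{equation*}
0 \longrightarrow \mathcal O_{\mathbb P^m} \longrightarrow \mathcal O_{\mathbb P^m}(1)^{\oplus(m+1)} \longrightarrow \mathcal T_{\mathbb P^m} \longrightarrow 0
\end{equation*}
by $\mathcal O_{\mathbb P^m}(-j)$ and read off from the long exact cohomology sequence that $H^1(\mathcal T_{\mathbb P^m}(-j))$ sits between $H^1(\mathcal O_{\mathbb P^m}(1-j))^{\oplus(m+1)}$ and $H^2(\mathcal O_{\mathbb P^m}(-j))$. Since $m\geq 3$, the intermediate cohomology $H^i(\mathbb P^m,\mathcal O_{\mathbb P^m}(\ell))$ vanishes for every $\ell$ and every $0<i<m$; in particular both flanking groups are $0$, so $H^1(\mathcal T_{\mathbb P^m}(-j))=0$ and both claimed vanishings follow.

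I do not expect a genuine obstacle here: this is a routine use of the Euler sequence together with Bott vanishing on projective space, and even the positivity of $j$ is not needed. The only point that requires attention — and the only place the numerical hypotheses enter — is checking that $\frac{1}{2}(\omega_Y(-1))$ is an honest line bundle rather than a formal symbol, which is guaranteed exactly by $m+d$ being odd; after that the middle cohomology of any line bundle on $\mathbb P^m$ ($m\geq 3$) vanishes, independently of the twist.
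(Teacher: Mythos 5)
Your argument is correct and is exactly the route the paper intends: the paper's proof consists of the single sentence ``The proof is straightforward using the Euler sequence in $\mathbb P^m$,'' and your write-up simply supplies the details (identifying $\frac{1}{2}(\omega_Y(-1))$ as $\mathcal O_{\mathbb P^m}(-\frac{m+1+d}{2})$, twisting the Euler sequence, and invoking the vanishing of intermediate cohomology of line bundles on $\mathbb P^m$ for $m\geq 3$). No discrepancies to report.
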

\begin{proof}
The proof is straightforward using the Euler sequence in $\mathbb P^m$. 
\end{proof}

\begin{proposition}\label{3.ext=0}
Let $Y=\mathbb Q_m$, with $m \geq 3$, embedded, as a non--degenerate variety, in $\mathbf P^N$ by the complete linear series of $\mathcal O_{\mathbb Q_m}(d)$, for which $ d \geq 1$ and $m+d$ is odd. Then $\mathrm{Ext}^1(\Omega_Y, \omega_Y(-1))=0$ and, if $m+d \geq 6$, then also $\mathrm{Ext}^1(\Omega_Y, \frac{1}{2}(\omega_Y(-1)))=0$.
\end{proposition}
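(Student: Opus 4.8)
The plan is to follow the pattern of Propositions~\ref{ext=0} and~\ref{2.ext=0}. Since $\Omega_Y$ is locally free, the local--to--global Ext spectral sequence collapses and gives $\mathrm{Ext}^1(\Omega_Y,\mathcal L)=H^1(\mathcal T_Y\otimes\mathcal L)$ for every coherent $\mathcal L$, so the two assertions amount to $H^1(\mathcal T_Y\otimes\omega_Y(-1))=0$ and, under the extra hypothesis $m+d\ge6$, to $H^1(\mathcal T_Y\otimes\frac{1}{2}(\omega_Y(-1)))=0$. Write $h$ for the hyperplane class of the quadric $Y=\mathbb Q_m\subset\mathbb P^{m+1}$, so that $\omega_Y=\mathcal O_{\mathbb Q_m}(-mh)$ and $\mathcal O_Y(1)=\mathcal O_{\mathbb Q_m}(dh)$; hence $\omega_Y(-1)=\mathcal O_{\mathbb Q_m}(-(m+d)h)$ and $\frac{1}{2}(\omega_Y(-1))=\mathcal O_{\mathbb Q_m}(-\frac{m+d}{2}h)$, the latter being an honest line bundle by the parity assumption. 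In each case we must compute $H^1$ of $\mathcal T_Y$ twisted by a single line bundle $\mathcal O_Y(th)$, with $t=-(m+d)$ for the first and $t=-\frac{m+d}{2}$ for the second.

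To do this I would exploit the embedding $Y\hookrightarrow\mathbb P^{m+1}$ (no fibration over a curve being available when $m\ge3$), via the dual of the conormal sequence of the quadric,
\[
0\longrightarrow\mathcal T_Y\longrightarrow\mathcal T_{\mathbb P^{m+1}}|_Y\longrightarrow\mathcal O_Y(2h)\longrightarrow0,
\]
together with the restriction to $Y$ of the dual Euler sequence of $\mathbb P^{m+1}$,
\[
0\longrightarrow\mathcal O_Y\longrightarrow\mathcal O_Y(h)^{\oplus(m+2)}\longrightarrow\mathcal T_{\mathbb P^{m+1}}|_Y\longrightarrow0.
\]
Tensoring both with $\mathcal O_Y(th)$ and chasing the long exact cohomology sequences, $H^1(\mathcal T_Y\otimes\mathcal O_Y(th))=0$ follows once one verifies the three line--bundle vanishings
\[
H^0\bigl(\mathcal O_Y((t+2)h)\bigr)=0,\qquad H^1\bigl(\mathcal O_Y((t+1)h)\bigr)=0,\qquad H^2\bigl(\mathcal O_Y(th)\bigr)=0,
\]
since the second sequence then gives $H^1(\mathcal T_{\mathbb P^{m+1}}|_Y\otimes\mathcal O_Y(th))=0$ (from the last two vanishings) and the first sequence gives $H^1(\mathcal T_Y\otimes\mathcal O_Y(th))=0$ (from that and the first vanishing).

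These are immediate from the fact that $\mathbb Q_m$ with $m\ge3$ has no intermediate cohomology, i.e. $H^i(\mathcal O_{\mathbb Q_m}(kh))=0$ for $0<i<m$ and all integers $k$, together with $H^0(\mathcal O_{\mathbb Q_m}(kh))\ne0$ exactly when $k\ge0$: the $H^1$ and $H^2$ terms vanish automatically (the $H^2$ one uses $m\ge3$), so the only substantive condition is $H^0(\mathcal O_Y((t+2)h))=0$, i.e. $t+2<0$. For $\mathcal L=\omega_Y(-1)$, with $t=-(m+d)$, this reads $m+d>2$, which holds unconditionally since $m\ge3$ and $d\ge1$, and the first assertion is done. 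For $\mathcal L=\frac{1}{2}(\omega_Y(-1))$, with $t=-\frac{m+d}{2}$, it reads $\frac{m+d}{2}\ge3$, i.e. $m+d\ge6$, which is precisely the stated hypothesis and the one place it is used; it excludes only the borderline value $m+d=4$, realized by $(m,d)=(3,1)$, where $\mathcal O_Y(2h)\otimes\frac{1}{2}(\omega_Y(-1))=\mathcal O_{\mathbb Q_3}$ has a nonzero section and the argument (presumably the statement itself) breaks down. That $H^0$--term is the main point to watch. Alternatively one could dualize by Serre duality on $Y$ to $H^{m-1}(\Omega_Y\otimes(\text{ample}))$ and invoke Bott vanishing on the quadric, but the route above keeps every input at the level of line bundles on $\mathbb Q_m$.
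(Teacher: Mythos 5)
Your proof is correct and is essentially the argument the paper intends: the paper's own proof is the single sentence that the result is ``straightforward using the Euler sequence in $\mathbb P^{m+1}$ restricted to $\mathbb Q_m$ and the normal sequence of $\mathbb Q_m$ in $\mathbb P^{m+1}$,'' which is exactly the two-sequence cohomology chase you carry out, and you correctly isolate $H^0(\mathcal O_Y((t+2)h))=0$ as the only substantive condition, hence the precise role of the hypothesis $m+d\geq 6$. One caveat: since $\omega_{\mathbb Q_m}=\mathcal O_{\mathbb Q_m}(-m)$ and $\mathrm{Pic}(\mathbb Q_m)=\mathbb Z$ for $m\geq 3$, the square root $\frac{1}{2}(\omega_Y(-1))=\mathcal O_{\mathbb Q_m}(-\frac{m+d}{2})$ exists as a line bundle exactly when $m+d$ is \emph{even}, not odd as the proposition (and your appeal to ``the parity assumption'') states --- this is a slip in the paper's hypotheses that your computation silently corrects rather than an error in your argument.
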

\begin{proof}
The proof is also straightforward using the Euler sequence in $\mathbb P^{m+1}$ restricted to $\mathbb Q_m$ and the normal sequence of $\mathbb Q_m$ in $\mathbb P^{m+1}$. 
\end{proof}

\noindent
The previous steps yield the main Theorem.

\begin{theorem}\label{main}
Let $X, \varphi$ and $Y$ be as in the set--up. Then any general deformation of $\varphi$ is a canonical map and a finite morphism of degree $3$ onto its image, which is a
deformation of $Y$.
\end{theorem}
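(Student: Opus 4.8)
The plan is to deduce Theorem~\ref{main} from the general deformation machinery for canonical covers developed by the authors (see \cite{DCMDF} for the degree--two case): the role of Propositions~\ref{ext=0}, \ref{2.ext=0} and \ref{3.ext=0} is to supply the precise cohomological hypotheses that machinery needs in each of the three cases of the set--up. Write $\varphi=i\circ\pi$. Since $\varphi$ is finite onto $Y$, its first--order deformations (with $\mathbb P^N$ fixed) are $H^0(X,\mathcal N_\varphi)$, where $\mathcal N_\varphi=\mathrm{coker}(\mathcal T_X\to\varphi^*\mathcal T_{\mathbb P^N})$, and the obstructions lie in $H^1(X,\mathcal N_\varphi)$. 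The factorisation $\mathcal T_X\hookrightarrow\pi^*\mathcal T_Y\hookrightarrow\varphi^*\mathcal T_{\mathbb P^N}$ gives $0\to\mathcal N_\pi\to\mathcal N_\varphi\to\pi^*\mathcal N_{Y/\mathbb P^N}\to 0$ with $\mathcal N_\pi$ torsion, supported on the ramification of $\pi$; pushing forward by the finite map $\pi$ and using $\pi_*\mathcal O_X=\mathcal O_Y\oplus\mathcal E$ (Remark~\ref{Seshadri.remark}) one gets
\[
0\longrightarrow\pi_*\mathcal N_\pi\longrightarrow\pi_*\mathcal N_\varphi\longrightarrow\mathcal N_{Y/\mathbb P^N}\otimes(\mathcal O_Y\oplus\mathcal E)\longrightarrow 0 .
\]
Writing $\mathcal E_1=\frac{1}{2}(\omega_Y(-1))$ and $\mathcal E_2=\omega_Y(-1)$ for the two line--bundle summands of $\mathcal E$, the cohomology of $\mathcal N_\varphi$ is thus built from the cohomology on $Y$ of $\mathcal N_{Y/\mathbb P^N}$, $\mathcal N_{Y/\mathbb P^N}\otimes\mathcal E_j$ and $\mathcal N_\pi$. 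To compare $\mathrm{Def}(\varphi)$ with the locus $\mathcal Z$ of triple covers of deformations of $Y$ --- equivalently, to degenerate $X$ to the natural split triple structure $\widetilde Y$ on $Y$ with ``conormal data'' $\mathcal E$ and analyse the morphisms $\widetilde Y\to\mathbb P^N$ extending $i$ --- one moreover needs the cohomology of $\mathcal T_Y$ and of $\mathcal T_Y\otimes\mathcal E_j$ $(j=1,2)$: the $\mathcal O_Y$--summands govern the deformations of $\varphi$ obtained by moving $Y$ inside $\mathbb P^N$ (and, when $Y=\mathbb P(E)$, also as an abstract variety) and pulling $\pi$ back, while the $\mathcal E_j$--summands govern the deformations of the triple--cover structure itself.

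For the $\mathcal O_Y$--part the needed facts hold for each of our three $Y$: the complete (very ample) embedding is projectively normal, $H^1(\mathcal N_{Y/\mathbb P^N})=H^2(\mathcal T_Y)=0$, and $Y$ deforms unobstructedly inside $\mathbb P^N$ ($\mathbb P^m$ and $\mathbb Q_m$ being rigid, $\mathbb P(E)$ deforming only to smooth projective bundles over $\mathbf P^1$). For the $\mathcal E_j$--part one needs $H^1(\mathcal T_Y\otimes\mathcal E_j)=\mathrm{Ext}^1(\Omega_Y,\mathcal E_j)=0$ together with $H^1(\mathcal N_{Y/\mathbb P^N}\otimes\mathcal E_j)=0$, the latter following for these $Y$ from the Euler and relative Euler sequences as in the proofs of Propositions~\ref{ext=0}--\ref{3.ext=0}; and the former is \emph{exactly} those Propositions, the hypothesis $m+d\geq 6$ in the quadric case being what forces $\mathrm{Ext}^1(\Omega_{\mathbb Q_m},\frac{1}{2}(\omega_{\mathbb Q_m}(-1)))=0$. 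Granting these vanishings, the morphism $\widetilde Y\to\mathbb P^N$ is unobstructed and all of its deformations are again morphisms from triple structures on deformations of $Y\subset\mathbb P^N$; connecting $\widetilde Y$ to $X$ by the standard family of cyclic triple covers (degenerating the branch divisor $B$ of Remark~\ref{-3/2} to zero) and smoothing, one concludes that $\mathcal Z$ fills a neighbourhood of $[\varphi]$ in $\mathrm{Def}(\varphi)$, both being smooth at $[\varphi]$ with tangent space $H^0(X,\mathcal N_\varphi)$. Hence a general deformation of $\varphi$ is a triple cover $\pi'\colon X'\to Y'$, with $Y'$ a deformation of $Y$, followed by the embedding $i'\colon Y'\hookrightarrow\mathbb P^N$.

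It remains to verify that this general deformation $\varphi'=i'\circ\pi'$ is the canonical map of $X'$. Finiteness and degree $3$ are automatic, since $\pi'$ is a triple cover. As $q(X)=h^1(\mathcal O_X)=h^1(\mathcal O_Y)+h^1(\mathcal E_1)+h^1(\mathcal E_2)=0$, line bundles on $X$ deform rigidly, so the relation $\omega_{X'}=\pi'^{\,*}\mathcal O_{Y'}(1)$, valid on $X$, persists; in particular $\omega_{X'}$ is base--point--free. Finally, let $\mathcal E'$ be the rank--two trace--zero module of $\pi'$, a deformation of $\mathcal E=\mathcal E_1\oplus\mathcal E_2$; by upper semicontinuity of $h^0$,
\[
h^0(\mathcal O_{Y'}(1)\otimes\mathcal E')\leq h^0(\mathcal O_Y(1)\otimes\mathcal E_1)+h^0(\mathcal O_Y(1)\otimes\mathcal E_2)=0+p_g(Y)=0 ,
\]
the first summand vanishing because $\varphi$ is canonical (Remark~\ref{Seshadri.remark}(2)) and the second because $Y$ is rational. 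Therefore $H^0(\omega_{X'})=H^0(\mathcal O_{Y'}(1)\otimes\pi'_*\mathcal O_{X'})=H^0(\mathcal O_{Y'}(1))$, so $\varphi'$ is given by the complete linear series $|\omega_{X'}|$, i.e.\ $\varphi'$ is the canonical map of $X'$. This completes the proof.

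The crux is the $\mathcal E_j$--part of the second paragraph: correctly identifying the obstruction to a deformation of $\varphi$ remaining a triple cover of a deformation of $Y$ with $\mathrm{Ext}^1(\Omega_Y,\mathcal E_1)\oplus\mathrm{Ext}^1(\Omega_Y,\mathcal E_2)$. One must isolate the ramification torsion $\mathcal N_\pi$ inside $\mathcal N_\varphi$, keep the two line bundles $\mathcal E_1=\frac{1}{2}(\omega_Y(-1))$ --- which has no analogue in the double--cover theory of \cite{DCMDF} --- and $\mathcal E_2=\omega_Y(-1)$ separate throughout, and rule out any interaction between the $\mathcal O_Y$-- and $\mathcal E_j$--parts of the deformation space that could create an obstructed deformation or destroy the factorisation through $Y$. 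Once this is in place, Propositions~\ref{ext=0}, \ref{2.ext=0} and \ref{3.ext=0} close the argument.
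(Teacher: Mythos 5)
Your overall strategy is genuinely different from the paper's, and unfortunately the step on which everything hinges is asserted rather than proved. You set up the exact sequence for $\mathcal N_\varphi$, list a collection of vanishings, and then claim that ``the morphism $\widetilde Y\to\mathbb P^N$ is unobstructed and all of its deformations are again morphisms from triple structures on deformations of $Y$,'' so that the locus $\mathcal Z$ of triple covers fills a neighbourhood of $[\varphi]$ in $\mathrm{Def}(\varphi)$. This is precisely the statement to be proved, and nothing in your sketch establishes it: you never exhibit the map from deformations of the covering data to deformations of $X$ (or of $\varphi$) and show it is smooth or surjective, you never carry out the comparison of tangent spaces between $\mathcal Z$ and $\mathrm{Def}(\varphi)$, and the degeneration of $X$ to the split triple structure $\widetilde Y$ followed by ``smoothing'' is not a substitute for such a comparison (in the authors' earlier work that rope machinery is used in the opposite direction, to detect when the degree of the canonical map \emph{drops} under deformation). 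Your own closing paragraph concedes that ``the crux is \ldots correctly identifying the obstruction to a deformation of $\varphi$ remaining a triple cover'' --- but that identification is exactly what is missing. You also introduce auxiliary hypotheses ($H^1(\mathcal N_{Y/\mathbb P^N}\otimes\mathcal E_j)=0$, projective normality, $H^2(\mathcal T_Y)=0$) that are neither verified nor needed.

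The paper closes this gap with a much shorter argument that you should compare with yours. Since $\pi$ is a \emph{cyclic} triple cover (Remark~\ref{Seshadri.remark}) with trace--zero module $\mathcal E=\frac{1}{2}(\omega_Y(-1))\oplus\omega_Y(-1)$, the vanishing $H^1(\mathcal T_Y\otimes\mathcal E)=0$ supplied by Propositions~\ref{ext=0}--\ref{3.ext=0} lets one invoke Wehler \cite[Corollary 1.11]{Wehler} (or \cite[Theorem 8.1]{Hor3}): every deformation $\mathfrak X$ of $X$ over a smooth curve is, after shrinking the base, a finite degree--$3$ cover $\Pi:\mathfrak X\to\mathfrak Y$ of a deformation $\mathfrak Y$ of $Y$. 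Then $H^1(\mathcal T_{\mathbf P^N}|_Y)=0$ allows one to realize $\mathfrak Y$ as a deformation of the pair $(Y,i)$, hence as a family of embeddings, so $\mathfrak i\circ\Pi$ is a deformation of $\varphi$; and \cite[Lemma 2.4]{MP} (the only deformation of a canonical morphism is the relative canonical morphism) identifies it with the given deformation $\Phi$ and shows the general fibre is again canonical. Note that cyclicity of $\pi$ is essential for the appeal to Wehler, whereas your argument never isolates where it is used except in the degeneration of the branch divisor. Your final paragraph (deformation invariance of $\omega_{X'}=\pi'^{*}\mathcal O_{Y'}(1)$ and the semicontinuity computation showing $\varphi'$ is given by the complete canonical series) is essentially correct and amounts to a reproof of \cite[Lemma 2.4]{MP}, and you correctly identified the role of the three Propositions; but without a proof of the covering--persistence step the argument does not go through.
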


\begin{proof}
We are assuming $\varphi : X \to \mathbf P^N$ is the canonical map of $X$ which factors $\varphi =i \circ \pi$, for which $\pi: X \to Y$ is a triple cover onto $Y$, which fits in one of the three cases in our set--up  and the embedding $i: Y \hookrightarrow \mathbf P^N$ is non--degenerate and given by a complete linear series as in the set--up.
By Propositions~\ref{ext=0}, \ref{2.ext=0}, \ref{3.ext=0}, we know that $H^1(\mathcal T_Y \otimes \mathcal E)=0$.

\noindent
Let $(\mathfrak X,\Phi)$ be a deformation of $(X,\varphi)$ over a smooth curve $T$ and let
$(X',\varphi')$ be a general member of $(\mathfrak X,\Phi)$.

\noindent
Since $H^1(\mathcal T_Y \otimes \mathcal  E)=0$, then \cite[Corollary 1.11]{Wehler}  (or \cite[Theorem 8.1]{Hor3}) tells us that
$\mathfrak X$ can be realized as a covering $\Pi : \mathfrak X \to \mathfrak Y $ (which, after shrinking $T$, is finite, surjective and of degree $3$) of a deformation $\mathfrak Y$ of $Y$.
Now, since $H^1(\mathcal T_{\mathbf P^N}|_Y)=0$ (as can be easily checked in all cases of our set--up), any deformation $\mathfrak Y$ of $Y$ can be realized as a deformation $(\mathfrak Y, \mathfrak i)$ of $(Y,i)$ (see \cite[Theorem 8.1]{Hor3}). Since $i$ is an embedding, we can assume, after shrinking $T$, that $\mathfrak i$ is a relative embedding of $\mathfrak Y$ in $\mathbf P^N_T$.
Then $\mathfrak i \circ \Pi=\Phi'$ is a deformation of
$\varphi$. Now we saw in \cite[Lemma 2.4]{MP} that the only deformation of $\varphi$ is the (relative) canonical morphism, so $\varphi'$ is the canonical morphism
of $X'$ and, after shrinking $T$, we have $\Phi=\mathfrak i \circ \Pi$. 
\end{proof}

\section{Examples and Further remarks}
Let $\pi: X \longrightarrow Y$ be a cyclic triple cover of a variety of minimal degree $Y$. In \cite{Seshadri} it is shown that if the dimension of $X$ is odd, then the degree of $Y$ cannot be an even integer. Now assume that the dimension of $X$ is even, it is shown in \cite{Seshadri} that there are no canonical, generically finite morphisms of degree 3 whose image is a smooth rational scroll. 

\noindent
It is good to know what kind of triple covers does occur in mathematical nature. In \cite{Seshadri} it is shown that triple canonical covers of $\mathbf P^m$, a hyperquadric and a smooth rational normal scroll do actually occur in many cases. We construct the covers below (see Remark \ref{numerical}). 

\medskip 
\noindent
 To construct examples of canonical morphisms $\varphi: X \longrightarrow \mathbf P^N$ of degree $3$, such that $\varphi$ induces a triple cover onto a smooth rational normal scroll $Y$ of odd dimension $m\geq 3$ and even degree $r$, we carry out the following construction: let  $Y=\mathbf P(E)$  over $\mathbf P^1$, where $E=\mathcal O_{\mathbb P^1} \oplus \mathcal O_{\mathbb P^1}(-e_1) \oplus \cdots \oplus \mathcal O_{\mathbb P^1}(-e_{m-1})$, with $0 \leq e_1\leq \cdots \leq e_{m-1}$, let $T$ be the divisor on $Y$ such that $\mathcal O_Y(T) = \mathcal O_{\mathbf P(E)}(1)$ and let $H=T+bF$ be the hyperplane divisor on $Y$, where $F$ is the class of a fiber and $\; b-e_{m-1} \geq 1$. Then take the triple cyclic cover $\pi: X \longrightarrow Y$ branched along a smooth divisor linearly equivalent to $3(\frac {m+1}{2} H- \frac{r-2}{2}F)$ (a sufficient condition for such a smooth divisor to exist is $(m+1)(b-e_{m-1})\geq r-2$) . In this case $\omega_X=\pi^{\ast}\mathcal O_{Y}(H)$ and $H^0(\omega_X)=\pi^{\ast}H^0(\mathcal O_{Y} (H))$, which shows that $\varphi=i \circ \pi$, where $i: Y \hookrightarrow \mathbf P^N$ is the embedding, as wanted. 

\medskip 
\noindent
To construct examples of canonical morphisms $ \varphi: X \longrightarrow \mathbf P^N$ of degree $3$, such that $\varphi$ induces a triple cover onto $Y=\mathbf P^m$, with $m \geq 3$, take the triple cyclic cover $\pi: X \longrightarrow \mathbf P^m$ branched along a smooth divisor of degree $\frac {3(m+d+1)} {2}$, for which $m+d$ is odd and $1 \leq d \leq m$. In this case $\omega_X=\pi^{\ast}\mathcal O_{\mathbf P^m}(d)$ and $H^0(\omega_X)=\pi^{\ast}H^0(\mathcal O_{\mathbf P^m} (d))$, which shows that $\varphi=i \circ \pi$, where $i: \mathbf P^m \hookrightarrow \mathbf P^N$ is the embedding, as wanted.

\medskip 
\noindent
To construct examples of canonical morphisms $ \varphi: X \longrightarrow \mathbf P^N$ of degree $3$, such that $\varphi$ induces a triple cover onto a hyperquadric $Y=\mathbf Q_m$, with $m \geq 3$, take the triple cyclic cover $\pi: X \longrightarrow \mathbf Q_m$ branched along a smooth divisor in $\mathbf Q_m$ which is the complete intersection of $\mathbf Q_m$ and a hypersurface in $\mathbf P^{m+1}$ of degree $\frac {3(m+d-1)}{2}$, for which $m+d$ is odd and $1 \leq d \leq m-2$.  In this case $\omega_X=\pi^{\ast}\mathcal O_{\mathbf Q_m}(d)$ and $H^0(\omega_X)=\pi^{\ast}H^0(\mathcal O_{\mathbf Q_m} (d))$, which shows that $\varphi=i \circ \pi$, where $i: \mathbf Q_m \hookrightarrow \mathbf P^N$ is the embedding, as wanted.

\end{document}